\newtheorem{lem}{Lemma}%[thm]
\newtheorem{cor}{Corollary}%[thm]
\newtheorem{Def}{Definition}%[thm]
\theoremstyle{definition}
\newtheorem{rem}{Remark}%[thm]
\newcommand{\dis}{\displaystyle}
\newcommand{\el}{\ell}
\newcommand{\ra}{\;\rightarrow\;}
\newcommand{\Ga} {{\varGamma}}
\newcommand{\OO} {{\varOmega}}
\newcommand{\De} {{\varDelta}}
\newcommand{\e}{\varepsilon }
\newcommand{\la}{\lambda }
\newcommand{\mi}{\mu }
\newcommand{\si}{\sigma }
\newcommand{\R}{\mathbb{R}}
\newcommand{\ct}{{\mathcal{T}}}
\newcommand{\cm}{{\mathcal{M}}}
\newcommand{\ld}{\ldots}
\newcommand{\hs}{\hfill$\square$}
\newcommand{\bbb}[1]{\mbox{\boldmath$#1$}}
\begin{document}

\title{Dyadic weights on $\bbb{\R^n}$ and Reverse H\"{o}lder inequalities}
\author{Eleftherios N. Nikolidakis, Antonios D. Melas}%\\
\footnotetext{\hspace{-0.5cm}This research has been co-financed by the European Union and Greek national funds through the Operational Program "Education and Lifelong Learning" of the National Strategic Reference Framework (NSRF), aristeia code: MAXBELLMAN 2760, research code:70/3/11913.} \footnotetext{MSC Number: 42B25}
\date{}
\maketitle
\noindent
{\bf Abstract:} We prove that for any weight $\phi$ defined on $[0,1]^n$ that satisfies a reverse H\"{o}lder inequality with exponent $p>1$ and constant $c\ge1$ upon all dyadic subcubes of $[0,1]^n$, it's non increasing rearrangement $\phi^\ast$, satisfies a reverse H\"{o}lder inequality with the same exponent and constant not more than $2^nc-2^n+1$, upon all subintervals of $[0,1]$ of the form $[0,t]$, $0<t\le1$. This gives as a consequence, according to the results in \cite{8}, an interval $[p,p_0(p,c))=I_{p,c}$, such that for any $q\in I_{p,c}$, we have that $\phi\in L^p$.\bigskip\\
\noindent
%
%{\bfsl{Keywords}}: Hardy operator, dyadic, maximal.
%
\section{Introduction}\label{sec1}
\noindent

The theory of Muckenhoupt's weights has been proved to be an important tool in analysis. One of the most important facts about these is their self improving property. A way to express this is through the so called reverse H\"{o}lder inequalities (see \cite{2}, \cite{3} and \cite{7}).

Here we will study such inequalities on a dyadic setting. We will say that the measurable function $g:[0,1]\ra\R^+$ satisfies the reverse H\"{o}lder inequality with exponent $p>1$ and constant $c\ge1$ if the inequality
\begin{eqnarray}
\frac{1}{b-a}\int^b_ag^p(u)du\le c\bigg(\frac{1}{b-a}\int^b_ag(u)du\bigg)^p, \label{eq1.1}
\end{eqnarray}
holds for every subinterval of $[0,1]$.

In \cite{1} it is proved the following\vspace*{0.2cm}\\
\noindent
{\bf Theorem A.} {\em Let $g$ be a non-increasing function defined on $[0,1]$, which satisfies (\ref{eq1.1}) on every interval $[a,b]\subseteq[0,1]$. Then if we define $p_0>p$ as the root of the equation
\begin{eqnarray}
\frac{p_0-p}{p_0}\bigg(\frac{p_0}{p_0-1}\bigg)^p\cdot c=1,  \label{eq1.2}
\end{eqnarray}
we have that $g\in L^q([0,1])$, for any $q\in[p,p_0)$. Additionally $g$ satisfies for every $q$ in the above range a reverse H\"{o}lder inequality for possibly another real constant $c'$. Moreover the result is sharp, that is the value $p_0$ cannot be increased.}

Now in \cite{4} or \cite{5} it is proved the following \vspace*{0.2cm} \\
\noindent
{\bf Theorem B.} {\em If $\phi:[0,1]\ra\R^+$ is measurable satisfying (\ref{eq1.1}) for every $[a,b]\subseteq[0,1]$, then it's non-increasing rearrangement $\phi^\ast$, satisfies the same inequality with the same constant $c$.}

Here by $\phi^\ast$ we denote the non-increasing rearrangement of $\phi$, which is defined on $(0,1]$ by
\[
\phi^\ast(t)=\sup_{E\subseteq[0,1]\atop\mid E\mid=t}\bigg\{\inf_{x\in E}\mid\phi(x)\mid\bigg\}, \qquad t\in(0,1].
\]
This can be defined also as the unique left continuous, non-increasing function, equimeasurable to $|\phi|$, that is, for every $\la>0$ the following equality holds:
\[
\mid\{\phi>\la\}\mid=\mid\{\phi^\ast>\la\}\mid,
\]
where by $\mid\cdot\mid$ we mean the Lesbesgue measure on $[0,1]$.

An immediate consequence of Theorem B, is that Theorem A can be generalized by ignoring the assumption of the monotonicity of the function $g$.

Recently in \cite{8} it is proved the following \vspace*{0.2cm} \\
\noindent
{\bf Theorem C.} {\em Let $g:(0,1]\ra\R^+$ be non-increasing which satisfies (\ref{eq1.1}) on every interval of the form $(0,t]$, $0<t\le1$. That is the following holds
\begin{eqnarray}
\frac{1}{t}\int^t_0g^p(u)du\le c\cdot\bigg(\frac{1}{t}\int^t_0g(u)du\bigg)^p  \label{eq1.3}
\end{eqnarray}
for every $t\in(0,1]$. Then if we define $p_0$ by (\ref{eq1.2}), we have that for any $q\in[p,p_0)$ the following inequality is true
\begin{eqnarray}
\frac{1}{t}\int^t_0g^q(u)du\le c'\bigg(\frac{1}{t}\int^t_0g(u)du\bigg)^q,  \label{eq1.4}
\end{eqnarray}
for every $t\in(0,1]$ and some constant $c'\ge c$. Thus $g\in L^q((0,1])$ for any such $q$. Moreover the result is sharp, that is we cannot increase $p_0$.}

A consequence of Theorem C is that under the assumption that $g$ is non-increasing, the hypothesis that (\ref{eq1.1}) is satisfied only on the intervals of the form $(0,t]$ is enough for one to realize the existence of a $p'>p$ fir which $g\in L^{p'}([0,1])$.

In several dimensions, as far as we know, there does not exists any similar result as Theorems A, B and C. All we know is the following, which can be seen in \cite{3}. \vspace*{0.2cm} \\
\noindent
{\bf Theorem D.} {\em Let $Q_0\subseteq\R^n$ be a cube and $\phi:Q_0\ra\R^+$ measurable that satisfies
\begin{eqnarray}
\frac{1}{\mid Q\mid}\int_Q\phi^p\le c\cdot\bigg(\frac{1}{\mid Q\mid}\int_Q\phi\bigg)^p  \label{eq1.5}
\end{eqnarray}
for fixed constants $p>1$ and $c\ge1$ and every cube $Q\subseteq Q_0$. Then there exists $\e=\e(n,p,c)$ such that the following inequality holds;
\begin{eqnarray}
\frac{1}{\mid Q\mid}\int_Q\phi^q\le c'\bigg(\frac{1}{\mid Q\mid}\int_Q\phi\bigg)^q  \label{eq1.6}
\end{eqnarray}
for every $q\in[p,p+\e)$, any cube $Q\subseteq Q_0$ and some constant $c'=c'(q,p,n,c)$.}

In several dimensions no estimate of the quantity $\e$, has been found. The purpose of this work is to study the multidimensional case in the dyadic setting. More precisely we consider a measurable function $\phi$, defined on $[0,1]^n=Q_0$, which satisfies (\ref{eq1.5}) for any $Q$, dyadic subcube of $Q_0$. These cubes can be realized by bisecting the sides of $Q_0$, then bisecting it's side of a resulting dyadic cube and so on. We define by $\ct_{2^n}$ the respective tree consisting of those mentioned dyadic subcubes of $[0,1]^n$. Then we will prove the following: \vspace*{0.2cm} \\
\noindent
{\bf Theorem 1.} {\em Let $\phi:Q_0=[0,1]^n\ra\R^+$ be such that
\begin{eqnarray}
\frac{1}{\mid Q\mid}\int_Q\phi^p\le c\cdot\bigg(\frac{1}{\mid Q\mid}\int_Q\phi\bigg)^p,  \label{eq1.7}
\end{eqnarray}
for any $Q\in\ct_{2^n}$ and some fixed constants $p>1$ and $c\ge1$. Then, if we set $h=\phi^\ast$ the non-increasing rearrangement of $\phi$, the following inequality is true
\begin{eqnarray}
\hspace*{1.5cm}\frac{1}{t}\int_0^th^p(u)du\le(2^nc-2^n+1)\bigg(\frac{1}{t}\int^t_0h(u)du\bigg)^p, \quad \text{for any} \quad t\in [0,1].  \label{eq1.8}
\end{eqnarray}}
As a consequence $h=\phi^\ast$ satisfies the assumptions of Theorem C, which can be applied and produce an $\e_1=\e_1(n,p,c)>0$ such that $h$ belongs to $L^q([0,1])$ for any $q\in[p,p+\e_1)$. Thus $\phi\in L^q([0,1]^n)$ for any such $q$. That is we can find an explicit value of $\e_1$. This is stated as Corollary 3.1 and is presented in the last section of this paper.

As a matter of fact we prove Theorem 1 in a much more general setting. More precisely we consider a non-atomic probability space $(X,\mi)$ equipped with a tree $\ct_k$, that is a $k$-homogeneous tree for a fixed integer $k>1$, which plays the role of dyadic sets as in $[0,1]^n$ (see the definition of Section \ref{sec2}).

As we shall see later, Theorem 1 is independent of the shape of the dyadic sets and depends only on the homogeneity of the tree $\ct_k$. Additionally we need to mention that the inequality (\ref{eq1.8}) cannot necessarily be satisfied, under the assumptions of Theorem 1, of one replaces the intervals $(0,t]$ by $(t,1]$. That is  $\phi^\ast$ is not necessarily a weight on $(0,1]$ satisfying a reverse H\"{o}lder inequality upon all subintervals of $[0,1]$ (see \cite{5}).

Additionally we mention that in \cite{6} the study of the dyadic $A_1$-weights appears, where one can find for any $c>1$ the best possible range $[1,p)$, for which the following holds: $\phi\in A^d_1(c)\Rightarrow\phi\in L^q$, for any $q\in[1,p)$. All last results that are connected with $A_1$ dyadic weights $\phi$ and the behavior of $\phi^\ast$ as an $A_1$-weight on $\R$, can be seen in \cite{9}.
\section{Preliminaries}\label{sec2}
\noindent

Let $(X,\mi)$ be a non-atomic probability space. We give the notion  of a $k$-homogeneous tree on $X$.
\begin{Def}\label{Def2.1}
Let $k$ be an integer such that $k>1$. A set $\ct_k$ will be called a $k$-homogeneous tree on $X$ if the following hold

(i) $X\in\ct_k$

(ii) For every $I\in\ct_k$, there corresponds a subset $C(I)\subseteq\ct_k$ consisting of $k$ subsets of $I$ such that
\begin{enumerate}
\item[(a)] the elements of $C(I)$ are pairwise disjoint
\item[(b)] $I=\dis\bigcup C(I)$
\item[(c)] $\mi(J)=\dfrac{1}{k}\mi(I)$, for every $J\in C(I)$.
\end{enumerate}
\end{Def}

For example one can consider $X=[0,1]^n$, the unit cube of $\R^n$. Define as $\mi$ the Lebesque measure on this cube. Then the set $\ct_k$ of all dyadic subcubes of $X$ is a tree of homogeneity $k=2^n$, with $C(Q)$ being the set of $2^n$-subcubes of $Q$, obtained by bisecting it's sides, for every $Q\in\ct_k$, starting from $Q=X$.

Let now $(X,\mi)$ be as above and a tree $\ct_k$ on $X$ as in Definition \ref{Def2.1}. From now on, we fix $k$ and write $\ct=\ct_k$. For any $I\in\ct$, $I\neq X$ we set $I^\ast$ the smallest element of $\ct$ such that $I^\ast\supsetneq I$. That is $I^\ast$ is the unique element of $\ct$ such that $I\in C(I^\ast)$. We call $I^\ast$ the father of $I$ in $\ct$. Then $\mi(I^\ast)=k\mi(I)$.
\begin{Def}\label{Def2.2}
For any $(X,\mi)$ and $\ct$ as above we define the dyadic maximal operator on $X$ with respect to $\ct$, noted as $\cm_\ct$, by
\setcounter{equation}{0}
\begin{eqnarray}
\cm_\ct\phi(X)=\sup\bigg\{\frac{1}{\mi(I)}\int_I\mid\phi\mid d\mi:x\in I\in\ct\bigg\},  \label{eq2.1}
\end{eqnarray}
for any $\phi\in L^1(X,\mi)$.
\end{Def}
\setcounter{rem}{0}
\begin{rem}\label{rem2.1}
It is not difficult to see that the maximal operator defined by (\ref{eq2.1}) satisfies a weak-type (1,1) inequality, which is the following:
\[
\mi(\{\cm_\ct\phi>\la\})\le\frac{1}{\la}\int_{\{\cm_\ct\phi>\la\}}\phi d\mi, \qquad \la>0.
\]
It is not difficult to see that the above inequality is best possible for every $\la>0$, and is responsible for the fact that $\ct$ differentiates $L^1(X,\mi)$, that is the following holds: $\dis\lim_{x\in I\in\ct\atop\mi(I)\ra0}\dfrac{1}{\mi(I)}\int_I\phi d\mi=\phi(x)$, $\mi$-almost everywhere on $X$.
This can be seen in \cite{4}.
\end{rem}

We will also need the following lemma which can be also seen in \cite{4}.
\begin{lem}\label{lem2.1}
Let $\phi$ be non-negative function defined on $E\cup\widehat{E}\subseteq X$ such that
\begin{eqnarray}
\frac{1}{\mi(E)}\int_E\phi d\mi=\frac{1}{\mi(\widehat{E})}\int_{\widehat{E}}\phi d\mi\equiv A,  \label{eq2.2}
\end{eqnarray}
Additionally suppose that
\begin{eqnarray}
\phi(x)\le A, \qquad \text{for every} \qquad x\notin E\cap\widehat{E}, \label{eq2.3}
\end{eqnarray}
and
\begin{eqnarray}
\phi(x)\le\phi(y), \qquad \text{for every}\qquad X\in\widehat{E}\setminus E,  \qquad{and} \qquad y\in E, \label{eq2.4}
\end{eqnarray}
Then, for every $p>1$ the following inequality holds
\begin{eqnarray}
\frac{1}{\mi(E)}\int_E\phi^pd\mi\le\frac{1}{\mi(\widehat{E})}\int_{\widehat{E}}\phi^pd\mi, \label{eq2.5}
\end{eqnarray}
\end{lem}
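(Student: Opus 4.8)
The plan is to split the common domain into the three regions $P=E\setminus\widehat{E}$, $N=\widehat{E}\setminus E$ and $D=E\cap\widehat{E}$, and to read off from the hypotheses the precise behaviour of $\phi$ on each. Writing $m=\inf_E\phi$, assumption (\ref{eq2.4}) gives $0\le\phi\le m$ on $N$, while $\phi\ge m$ on all of $E$, hence on $D$; assumption (\ref{eq2.3}) gives $\phi\le A$ on $P\cup N$, so that $m\le\phi\le A$ on $P$. Since (\ref{eq2.5}) is equivalent, after clearing the (positive) denominators, to $\mi(\widehat{E})\int_E\phi^p\,d\mi\le\mi(E)\int_{\widehat{E}}\phi^p\,d\mi$, and since $\int_D\phi^p\,d\mi$ is common to both sides, I would expand everything into the regional integrals $S_P=\int_P\phi^p\,d\mi$, $S_N=\int_N\phi^p\,d\mi$, $S_D=\int_D\phi^p\,d\mi$ and reduce the claim to a single algebraic inequality among these and the measures $\pi=\mi(P)$, $\nu=\mi(N)$, $\delta=\mi(D)$.

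Next I would extract the linear information contained in the equal--averages hypothesis (\ref{eq2.2}). Subtracting $A$ and integrating shows
\[
\int_P(A-\phi)\,d\mi=\int_N(A-\phi)\,d\mi=\int_D(\phi-A)\,d\mi=:\theta\ge0 .
\]
Because $A-\phi\ge A-m$ on $N$ and $A-\phi\le A-m$ on $P$, the first equality forces $\nu\le\pi$; it also pins down $\int_P\phi\,d\mi$, $\int_N\phi\,d\mi$ and $\int_D\phi\,d\mi$ in terms of $A$, $\theta$ and the measures. The degenerate possibilities $m=A$ and $\theta=0$ collapse $\phi$ to the constant $A$ on $P\cup N$ and are checked directly, so I may assume $m<A$ and $\theta>0$.

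Finally, to bound the regional integrals of $\phi^p$ I would use convexity of $t\mapsto t^p$ in the form best suited to each region: Jensen's inequality on $D$ and on $N$, giving the lower bounds $S_D\ge\delta(A+\theta/\delta)^p$ and $S_N\ge\nu(A-\theta/\nu)^p$ which sit on the favourable side of the reduced inequality, together with the secant--line bound of $t^p$ on the segment $[m,A]$ to bound $S_P$ from above by $A^p\pi-\theta(A^p-m^p)/(A-m)$. Substituting these three estimates, the $A^p$--terms cancel identically and the claim reduces to a single inequality in the one parameter $\theta$, ranging over $[(A-m)\nu,\ (A-m)\pi]$, which is convex in $\theta$ and which I would verify is nonnegative; it is sharp exactly when $\nu=\pi$, equivalently when $\phi\equiv m$ on $P\cup N$. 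The main obstacle is precisely this choice of estimates: the crude tangent--line bound of $\phi^p$ at $A$ is too lossy on the extreme regions $N$ and $D$ and fails to close the inequality, so the argument hinges on using the global Jensen bounds there while keeping the exact secant bound on the middle region $P$; making the final one--parameter inequality close, with equality attained in the critical regime $\nu=\pi$, is the delicate point.
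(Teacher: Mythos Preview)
The paper does not prove Lemma~\ref{lem2.1}; it quotes it from Korenovskii's monograph~\cite{4}. So there is no in--paper argument to compare against, and the question is only whether your strategy stands on its own.

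Your decomposition $E\cup\widehat E=P\cup D\cup N$ and the extraction of the common value $\theta=\int_P(A-\phi)=\int_N(A-\phi)=\int_D(\phi-A)$ are correct, as is the observation $(A-m)\nu\le\theta\le(A-m)\pi$ (whence $\nu\le\pi$). Your three estimates are also the right extremal choices: for fixed $m$ and $\theta$ they give exactly the minimum of the quantity $(\pi-\nu)S_D+(\pi+\delta)S_N-(\nu+\delta)S_P$, so proving your reduced function $F(\theta)\ge0$ on the admissible range is both necessary and sufficient.

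The one point you leave open --- ``which I would verify is nonnegative'' --- does close, and here is how. Since $F(0)=0$ and $F$ is convex, the ratio $F(\theta)/\theta$ is nondecreasing for $\theta>0$; hence it suffices to check the \emph{left} endpoint $\theta=(A-m)\nu$. At that value $A-\theta/\nu=m$, and a short computation collapses the inequality to
\[
(\pi-\nu)\bigl[\,\delta B^p+\nu m^p-(\nu+\delta)A^p\,\bigr]\ \ge\ 0,\qquad B:=A+\tfrac{(A-m)\nu}{\delta}.
\]
But $\delta B+\nu m=(\nu+\delta)A$, so $A$ is the weighted mean of $B$ and $m$ with weights $\delta/(\nu+\delta)$ and $\nu/(\nu+\delta)$, and Jensen's inequality for $t\mapsto t^p$ gives exactly $\delta B^p+\nu m^p\ge(\nu+\delta)A^p$. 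Together with $\pi\ge\nu$ this finishes the proof, with equality precisely when $\pi=\nu$ (or $m=A$), matching your sharpness remark. So the plan is sound; you just need to supply this last Jensen step rather than leave it as a promise.
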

\section{Weights on $(X,\mi,\ct)$}\label{sec3}
\noindent

We proceed now to the \vspace*{0.2cm} \\
\noindent
{\bf Proof of Theorem 1}. We suppose that $\phi$ is non-negative defined on $(X,\mi)$ and satisfies a reverse H\"{o}lder inequality of the form
\setcounter{equation}{0}
\begin{eqnarray}
\frac{1}{\mi(I)}\int_I\phi^pd\mi\le c\cdot\bigg(\frac{1}{\mi(I)}\int_I\phi d\mi\bigg)^p,  \label{eq3.1}
\end{eqnarray}
for every $I\in\ct$, where $c,p$ are fixed such that $p>1$ and $c\ge1$. We will prove that for any $t\in(0,1]$ we have that
\begin{eqnarray}
\frac{1}{t}\int^t_0[\phi^\ast(u)]^pdu\le(kc-k+1)\bigg(\frac{1}{t}\int^t_0\phi^\ast(u)du\bigg)^p,
\label{eq3.2}
\end{eqnarray}
where $\phi^\ast$ is the non-increasing rearrangement of $\phi$, defined as in Remark \ref{rem1.1}, on $(0,1]$, and $k$ is the homogeneity of $\ct$. Fix a $t\in(0,1]$ and set
\[
A=A_t=\frac{1}{t}\int^t_0\phi^\ast(u)du.
\]
Consider now the following subset of $X$ defined by
\begin{eqnarray}
E_t=\{x\in X:\cm_\ct\phi(x)>A\},  \label{eq3.3}
\end{eqnarray}
Then for any $x\in E_t$, there exists an element of $\ct$, say $I_x$, such that
\begin{eqnarray}
x\in I_x \qquad \text{and} \qquad \frac{1}{\mi(I_x)}\int_{I_x}\phi d\mi>A.  \label{eq3.4}
\end{eqnarray}
For any such $I_x$ we obviously have that $I_x\subseteq E_t$. We set $S_{\phi,t}=\{I_x:x\in E_t\}$. This is a family of elements of $\ct$ such that $U\{I:I\in S_{\phi,t}\}=E_t$. Consider now those $I\in S_{\phi,t}$ that are maximal with respect to the relation of $\subseteq$. We write this subfamily of $S_{\phi,t}$ as $S'_{\phi,t}=\{I_j:j=1,2,\ld\}$ which is possibly finite. Then $S'_{\phi,t}$ is a disjoint family of elements of $\ct$, because of the maximality of every $I_j$ and the tree structure of $\ct$. (see Definition \ref{Def2.1}).

Then by construction, this family still covers $E_t$, that is $E_t=\dis\bigcup^\infty_{j=1}I_j$. For any $I_j\in S'_{\phi,t}$ we have that $I_j\neq X$, because if $I_j=X$ for some $j$, we could have from (\ref{eq3.4}) that
\[
\int^1_0\phi^\ast(u)du=\int_X\phi d\mi=\frac{1}{\mi(I_j)}\int_{I_j}\phi d\mi>A=\frac{1}{t}\int^t_0\phi^\ast(u)du,
\]
which is impossible, since $\phi^\ast$ is non-increasing on $(0,1]$. Thus, for every $I_j\in S'_{\phi,t}$ we have that $I^\ast_j$ is well defined, but may be common for any two or more elements of $S'_{\phi,t}$. We may also have that $I^\ast_j\subseteq I^\ast_i$ for some $I_j,I_i\in S'_{\phi,t}$.

We consider now the family
\[
L_{\phi,t}=\{I^\ast_j:j=1,2,\ld\}\subseteq\ct.
\]
As we mentioned above, this is not necessarily a pairwise disjoint family. We choose a pairwise disjoint subcollection, by considering those $I^\ast_j$ that are maximal, with respect to the relation $\subseteq$.

We denote this family as
\[
L'_{\phi,t}=\{I^\ast_{j_s}:s=1,2,\ld\}.
\]
Then of course
\[
\bigcup{J:J\in L_{\phi,t}}=\bigcup{J:J\in L'_{\phi,t}}.
\]
Since, each $I_j\in S'_{\phi,t}$ is maximal we should have that
\begin{eqnarray}
\frac{1}{\mi(I^\ast_{j_s})}\int_{I^\ast_{j_s}}\phi d\mi\le A,  \label{eq3.5}
\end{eqnarray}
Now note that every $I^\ast_{j_s}$ contains at least one element of $S'_{\phi,t}$, such that $I\in C(I^\ast_{j_s})$. (one such is $I_{j_s}$). Consider for any $s$ the family of all those $I$ such that $I^\ast\subseteq I^\ast_{j_s}$. We write it as
\[
S'_{\phi,t,s}=\{I\in S'_{\phi,t}:I^\ast\subseteq I^\ast_{j_s}\}.
\]
For any $I\in S'_{\phi,t,s}$ we have of course that
\[
\frac{1}{\mi(I)}\int_I\phi d\mi>A, \qquad \text{so if we set} \qquad K_s=U\{I:I\in S'_{\phi,t,s}\}.
\]
We must have, because of the disjointness of the elements of family $S'_{\phi,t}$, that
\begin{eqnarray}
\frac{1}{\mi(K_s)}\int_{K_s}\phi d\mi>A.  \label{eq3.6}
\end{eqnarray}
Additionally, $K_s\subseteq I^\ast_{j_s}$ and by (\ref{eq3.5}) and the comments stated above we easily see that
\begin{eqnarray}
\frac{1}{k}\mi(I^\ast_{j_s})\le\mi(K_s)<\mi(I^\ast_{j_s}),  \label{eq3.7}
\end{eqnarray}
By (\ref{eq3.5}) and (\ref{eq3.6}) we can now choose (because $\mi$ is non-atomic) for any $s$, a measurable set $B_s\subseteq I^\ast_{j_s}\setminus K_s$, such that if we define $\Ga_s=K_s\cup B_s$, then $\dfrac{1}{\mi(\Ga_s)}\dis\int_{\Ga_s}\phi d\mi=A$.

We set now $E^\ast_t=\dis\bigcup_s I^\ast_{j_s}$
\[
\Ga=\bigcup_s\Ga_s, \ \ \De=\bigcup_s\De_s,
\]
where $\De_s=I^\ast_{j_s}\setminus\Ga_s$, for any $s=1,2,\ld\;.$

Then by all the above, we have that
\[
\Ga\cup\De=E^\ast_t \qquad \text{and} \qquad \frac{1}{\mi(\Ga)}\int_\Ga\phi d\mi=A_t,
\]
which is true in view of the pairwise disjointness of $\big(I^\ast_{j_s}\big)^\infty_{s=1}$.

Define now the following function
\[
h:=(\phi/\Ga)^\ast:(0,\mi(\Ga)]\ra\R^+.
\]
Then obviously
\[
\frac{1}{\mi(\Ga)}\int^{\mi(\Ga)}_0h(u)du=\frac{1}{\mi(\Ga)}\int_\Ga\phi d\mi=A_t.
\]
By the definition of $h$ we have that $h(u)\le\phi^\ast(u)$, for any $u\in(0,\mi(\Ga)]$. Thus we conclude:
\begin{eqnarray}
\frac{1}{\mi(\Ga)}\int^{\mi(\Ga)}_0\phi^\ast(u)du\ge\frac{1}{\mi(\Ga)}\int^{\mi(\Ga)}_0
h(u)du=A_t=\frac{1}{t}\int^t_0\phi^\ast(u)du,  \label{eq3.8}
\end{eqnarray}
From (\ref{eq3.8}), we have that $\mi(\Ga)\le t$, since $\phi^\ast$ is non-increasing.

We now consider a set $E\subseteq X$ such that $(\phi/E)^\ast=\phi^\ast/(0,t]$, with $\mi(E)=t$ and for which $\{\phi>\phi^\ast(t)\}\subseteq E\subseteq\{\phi\ge\phi^\ast(t)\}$.

It's existence is guaranteed by the equimeasurability of $\phi$ and $\phi^\ast$, and the fact that $(X,\mi)$ is non-atomic. Then, we see immediately that
\[
\frac{1}{\mi(E)}\int_E\phi d\mi=\frac{1}{t}\int^t_0\phi^\ast(u)du=A_t.
\]
We are going now to construct a second set $\widehat{E}\subseteq X$. We first set $\widehat{E}_1=\Ga$.

Let now $x\notin\widehat{E}_1$. Since $\Ga\supseteq\{\cm_\ct\phi>A_t\}$, we must have that $\cm_\ct\phi(x)\le A_t$. But since $\ct$ differentiates $L^1(X,\mi)$ we obviously have that for $\mi$-almost every $y\in X:\phi(y)\le\cm_\ct\phi(y)$. Then the set $\OO=\{x\notin\widehat{E}_1:\phi(x)>\cm_\ct\phi(x)\}$ has $\mi$-measure zero.

At last we set $\widehat{E}=\widehat{E}_1\cup\OO=\Ga\cup\OO$.

Then $\mi(\widehat{E})=\mi(\Ga)$ and for every $x\notin\widehat{E}$ we have that $\phi(x)\le\cm_\ct\phi(x)\le A_t$.

Let now $x\notin E$. By the construction of $E$ we immediately see that $\phi(x)\le\phi^\ast(t)\le\dfrac{1}{t}\dis\int^t_0\phi^\ast(u)du=A_t$. Thus, if $x\notin E$ or $x\notin\widehat{E}$, we must have that $\phi(x)\le A_t$, that is (\ref{eq2.3}) of Lemma \ref{lem2.1} is satisfied for these choices of $E$ and $\widehat{E}$. Let now $x\in\widehat{E}\setminus E$ and $y\in E$. Then we obviously have by the above discussion that $\phi(x)\le\phi^\ast(t)\le\phi(y)$. That is $\phi(x)\le\phi(y)$. Thus (\ref{eq2.4}) is also satisfied. Also since $\widehat{E}=\Ga\cup\OO$, we obviously have $\dfrac{1}{\mi(\widehat{E})}\dis\int_{\widehat{E}}\phi d\mi=A_t$, so as a consequence (\ref{eq2.2}) is satisfied also.

Applying Lemma \ref{lem2.1}, we conclude that
\[
\frac{1}{\mi(E)}\int_E\phi^pd\mi\le\frac{1}{\mi(\widehat{E})}\int_{\widehat{E}}\phi^pd\mi,
\]
or by the definitions of $E$ and $\widehat{E}$ that
\begin{eqnarray}
\frac{1}{t}\int^t_0[\phi^\ast(u)]^pdu\le\frac{1}{\mi(\Ga)}\int_\Ga\phi^pd\mi, \label{eq3.9}
\end{eqnarray}
Our aim is now to show that the right integral average in (\ref{eq3.9}) is less or equal that $(kc-k+1)(A_t)^p$. We proceed to this as follows:

We set $\el_\Ga=\dfrac{1}{\mi(\Ga)}\dis\int_\Ga\phi^pd\mi$. Then by the notation given above, we have that:
\begin{align}
\el_\Ga&=\frac{1}{\mi(\Ga)}\bigg(\int_{E^\ast_t}\phi^pd\mi-\int_\De\phi^pd\mi\bigg) \nonumber\\
&=\frac{1}{\mi(\Ga)}\bigg(\sum^\infty_{s=1}\int_{I^\ast_{j_s}}\phi^pd\mi-
\sum^\infty_{s=1}\int_{\De_s}\phi^pd\mi\bigg) \nonumber\\
&=\frac{1}{\mi(\Ga)}\sum^\infty_{s=1}p_s,  \label{eq3.10}
\end{align}
where the $p_s$ are given by
\[
p_s=\int_{I^\ast_{j_s}}\phi^pd\mi-\int_{\De_s}\phi^pd\mi, \qquad \text{for any} \qquad s=1,2,\ld\;.
\]
We find now an effective lower bound for the quantity $\dis\int_{\De_s}\phi^pd\mi$. By H\"{o}lder's inequality:
\begin{eqnarray}
\int_{\De_s}\phi^pd\mi\ge\frac{1}{\mi(\De_s)^{p-1}}\bigg(\int_{\De_s}\phi d\mi\bigg)^p,  \label{eq3.11}
\end{eqnarray}
Since $\De_s=I^\ast_{j_s}\setminus\Ga_s$, (\ref{eq3.11}) can be written as
\begin{eqnarray}
\int_{\De_s}\phi^pd\mi\ge\frac{\Big(\dis\int_{I^\ast_{j_s}}\phi d\mi-\dis\int_{\Ga_s}\phi du\Big)^p}{(\mi(I^\ast_{j_s})-\mi(\Ga_s))^{p-1}},  \label{eq3.12}
\end{eqnarray}
We now use H\"{o}lder's inequality in the form
\[
\frac{(\la_1+\la_2)^p}{(\si_1+\si_2)^{p-1}}\le\frac{\la_1^p}{\si^{p-1}_1}+\frac{\la^p_2}
{\si_2^{p-1}}, \quad \text{for} \quad \la_i\ge0 \quad \text{and} \quad \si_i>0
\]
which holds since $p>1$. Thus (\ref{eq3.12}) gives
\begin{eqnarray}
\int_{\De_s}\phi^pd\mi\ge\frac{1}{\mi(I^\ast_{j_s})^{p-1}}\bigg(\int_{I^\ast_{j_s}}
\phi d\mi\bigg)^p-\frac{1}{\mi(\Ga_s)^{p-1}}\bigg(\int_{\Ga_s}\phi d\mi\bigg)^p.  \label{eq3.13}
\end{eqnarray}
Since
$\dfrac{1}{\mi(\Ga_s)}\dis\int_{\Ga_s}\phi d\mi=A_t$,  (\ref{eq3.13}) gives
\[
\int_{\De_s}\phi^pd\mi\ge\frac{1}{\mi(I^\ast_{j_s})^{p-1}}\bigg(\int_{I^\ast_{j_s}}\phi d\mi\bigg)^p-\mi(\Ga_s)\cdot(A_t)^p,
\]
so we conclude, by the definition of $p_s$, that
\begin{eqnarray}
p_s\le\int_{I^\ast_{j_s}}\phi^pd\mi-\frac{1}{\mi(I^\ast_{j_s})^{p-1}}\bigg(\int_{I^\ast_{j_s}}
\phi d\mi\bigg)^p+\mi(\Ga_s)\cdot(A_t)^p, \label{eq3.14}
\end{eqnarray}
Using now (\ref{eq3.1}) for $I=I^\ast_{j_s}$, $s=1,2,\ld$ we have as a consequence that:
\begin{eqnarray}
p_s\le(c-1)\frac{1}{\mi(I^\ast_{j_s})^{p-1}}\bigg(\int_{I^\ast_{j_s}}\phi d\mi\bigg)^p+\mi(\Ga_s)(A_t)^p.  \label{eq3.15}
\end{eqnarray}
Summing now (\ref{eq3.15}) for $s=1,2,\ld$ we obtain in view of (\ref{eq3.10}) that
\begin{eqnarray}
\hspace*{1cm}\el_\Ga\le\frac{1}{\mi(\Ga)}\bigg[(c-1)\sum^\infty_{s=1}\frac{1}{\mi(I^\ast_{j_s})^{p-1}}\bigg(\int_{I^\ast_{j_s}}
\phi d\mi\bigg)^p+\bigg(\sum^\infty_{s=1}\mi(\Ga_s)\bigg)(A_t)^p\bigg].  \label{eq3.16}
\end{eqnarray}
Now from $\dfrac{1}{\mi(I^\ast_{j_s})}\dis\int_{I^\ast_{j_s}}\phi d\mi\le A_t$, we see that
\begin{align}
\el_\Ga&\le\frac{1}{\mi(\Ga)}\bigg[(c-1)\sum^\infty_{s=1}\mi(I^\ast_{j_s})\cdot(A_t)^p+\mi(\Ga)
\cdot(A_t)^p\bigg]  \nonumber \\
&=\bigg[(c-1)\frac{\mi(E^\ast_t)}{\mi(\Ga)}+1\bigg]\cdot(A_t)^p,  \label{eq3.17}
\end{align}
Since now $E^\ast_t\supseteq\Ga\supseteq E_t$, by (\ref{eq3.7}) we have that
\[
\mi(E^\ast_t)\le k\mi(E_t)\le K\mi(\Ga).
\]
Thus (\ref{eq3.17}) gives
\[
\frac{1}{\mi(\Ga)}\int_\Ga\phi^pd\mi\le[k(c-1)+1](A_t)^p.
\]
Using now (\ref{eq3.9}) and the last inequality we obtained the desired result. \hs
\begin{cor}\label{cor3.1}
If $\phi$ satisfies (\ref{eq3.1}) for every $I\in\ct$, then $\phi\in L^q$, for any $q\in[p,p_0)$, where $p_0$ is defined by $\dfrac{p_0-p}{p_0}\cdot\Big(\dfrac{p_0}{p_0-1}\Big)^p\cdot(kc-k+1)=1$.
\end{cor}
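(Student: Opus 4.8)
The plan is to deduce the corollary directly from Theorem 1 together with Theorem C, using only the equimeasurability of $\phi$ and $\phi^\ast$ at the very end. All the substantive work has already been done in the proof of Theorem 1; the corollary amounts to feeding its conclusion into the one-dimensional self-improvement result of Theorem C and then transferring integrability back to $\phi$.

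First I would record that, by Theorem 1 (inequality (\ref{eq3.2})), the function $h=\phi^\ast$ satisfies
\[
\frac{1}{t}\int_0^t h^p(u)\,du\le(kc-k+1)\Big(\frac{1}{t}\int_0^t h(u)\,du\Big)^p
\]
for every $t\in(0,1]$; that is, $h$ obeys the reverse H\"older inequality (\ref{eq1.3}) on all intervals of the form $(0,t]$, with exponent $p$ and constant $\tilde c=kc-k+1$. Here I would verify the two hypotheses of Theorem C: $h=\phi^\ast$ is non-increasing on $(0,1]$ by the definition of the rearrangement, and the displayed inequality is precisely (\ref{eq1.3}) with $c$ replaced by $\tilde c$. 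One should also note that $\tilde c=kc-k+1\ge1$, since $c\ge1$ and $k>1$, so the constant is admissible.

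Next I would invoke Theorem C with $g=h$ and constant $\tilde c$. Defining $p_0>p$ as the root of
\[
\frac{p_0-p}{p_0}\Big(\frac{p_0}{p_0-1}\Big)^p\cdot(kc-k+1)=1,
\]
Theorem C yields that $h=\phi^\ast\in L^q((0,1])$ for every $q\in[p,p_0)$ (in fact with a reverse H\"older inequality of exponent $q$ on all $(0,t]$). This is exactly the defining equation for $p_0$ stated in the corollary, namely (\ref{eq1.2}) with $\tilde c=kc-k+1$ in place of the constant $c$.

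Finally I would transfer the integrability back to $\phi$ by equimeasurability. Since $\phi$ and $\phi^\ast$ are equimeasurable, for every $q$ one has $\int_X\phi^q\,d\mi=\int_0^1[\phi^\ast(u)]^q\,du$, so $\phi\in L^q$ if and only if $\phi^\ast\in L^q$. Combined with the previous step this gives $\phi\in L^q$ for all $q\in[p,p_0)$, as claimed. There is no genuine analytic obstacle, since Theorem 1 and Theorem C do all the heavy lifting; the only point requiring care is to ensure that the constant passed to Theorem C is $kc-k+1$ and not $c$, so that the equation determining $p_0$ is precisely the one appearing in the statement of the corollary.
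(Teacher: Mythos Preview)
Your proof is correct and follows the same approach as the paper, which simply says ``Immediate from Theorem 1 and A.'' In fact your citation of Theorem C is the more precise one, since Theorem 1 only yields the reverse H\"older inequality for $\phi^\ast$ on intervals of the form $(0,t]$, which is exactly the hypothesis of Theorem C (and the paper itself notes this in the introduction); the resulting $p_0$ is the same either way.
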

\begin{proof}
Immediate from Theorem 1 and A.  \hs
\end{proof}
\setcounter{rem}{0}
\begin{rem}\label{rem3.1}
All the above hold if we replace the condition (\ref{eq3.1}), by the known Muckenhoupt condition of $\phi$ over the dyadic sets of $X$. Then the same proof as above gives that the Muckenhoupt condition should hold for $\phi^\ast$, for the intervals of the form $(0,t]$, and for the constant $kc-k+1$. This is true since there exists analogous lemma as Lemma \ref{lem2.1} for this case (as can be seen in \cite{4}). Also the inequality that is used in order to produce (\ref{eq3.13}) from (\ref{eq3.12}) is true even for negative exponent $p<0$. We ommit the details. \hs
\end{rem}

Eleftherios N. Nikolidakis, Antonios D. Melas:
National and Kapodistrian University of Athens, Department of Mathematics, Zografou, GR-157 84
E-mail addresses: lefteris@math.uoc.gr, amelas@math.uoa.gr

\end{document}